\newcommand \indexkeyword[1]{\emph{#1}\index{#1}}
\newcommand \eqnlabel[1]{\yesnumber\label{eqn:#1}}    
\newcommand \dfnlabel[1]{\label{dfn:#1}}
\newcommand \thmlabel[1]{\label{thm:#1}}
\newcommand \lemlabel[1]{\label{lem:#1}}
\newcommand \seclabel[1]{\label{sec:#1}}
\newcommand \figlabel[1]{\label{fig:#1}}
\newcommand \eqnref[1]{(\ref{eqn:#1})}      
\newcommand \thmref[1]{Theorem~\ref{thm:#1}}    
\newcommand \lemref[1]{Lemma~\ref{lem:#1}}
\newcommand \figref[1]{Figure~\ref{fig:#1}}     
  \newcommand\translater[2]{T^{#1}#2}
  \newcommand\translaterone[1]{T#1}
  \newcommand\shrink[2]{S_{#1}#2}
  \newcommand\translater[2]{#2 \rightarrow #1}
  \newcommand\translaterone[1]{\translater{1}{#1}}
  \newcommand\shrink[2]{#2 \downarrow #1}
\newcommand \polyfunc[1]{\widehat{#1}}
\newcommand \mapelement{\mapsto}
\newcommand \createfunc[2]{\left(#1\mapelement#2\right)}
\DeclareMathOperator*{\dif}{d}
\DeclareMathOperator*{\diagid}{diag}
\newcommand \diag[1]{\diagid \left(#1\right)}   
\newcommand \mset[1]{\left\{#1\right\}} 
\newcommand \field[1]{\mathbb{#1}}
\newcommand \N{\field{N}}
\newcommand \Z{\field{Z}}
\newcommand \R{\field{R}}
\newcommand \summablesequences[2]{\ell_{#1}\left(#2\right)}
\newcommand \il{\nu}
\newcommand \ir{\kappa}
\newcommand \person[1]{{\sc #1}}
\newcommand \Daubechies{\person{Daubechies}}
\newcommand \Dirac{\person{Dirac}}
\newcommand \Fourier{\person{Fourier}}
\newcommand \Laurent{\person{Laurent}}
\begin{document}

\newcommand\mytitle{How to refine polynomial functions}

\markboth{Henning Thielemann}{\mytitle}

\catchline{}{}{}{}{}

\title{\mytitle}

\author{Henning Thielemann}

\address{Institut f\"{u}r Informatik \\
Martin-Luther-Universit\"{a}t Halle-Wittenberg \\
06110 Halle \\
Germany \\
henning.thielemann@informatik.uni-halle.de}

\maketitle

\def\extended#1{Extended~#1\par}

\begin{history}
\received{28.11.2010}
\accepted{26.03.2011}
\revised{13.07.2011}
\extended{23.11.2011}
\end{history}

\begin{abstract}
Research on refinable functions in wavelet theory
is mostly focused to localized functions.
However it is known, that polynomial functions are refinable, too.
In our paper we investigate on conversions between
refinement masks and polynomials and their uniqueness.
\end{abstract}

\keywords{Refinable Function; Wavelet Theory; Polynomial.}

\ccode{AMS Subject Classification: 42C40, }

\graphicspath{{figures/}}

\section{Introduction}

\newcommand\drawframe[6]{
  \def\left{#1}
  \def\right{#2}
  \def\xticks{#3}
  \def\bottom{#4}
  \def\top{#5}
  \def\yticks{#6}
  \draw (\left,\bottom) rectangle (\right,\top);
  \foreach \x in \xticks {
     \draw (\x,\bottom) node[below] {\x}
                     -- (\x,0.95*\bottom+0.05*\top);
     \draw (\x,\top) -- (\x,0.05*\bottom+0.95*\top);
  }
  \foreach \y in \yticks {
     \draw (\left, \y) node[left] {\y}
                       -- (0.98*\left+0.02*\right,\y);
     \draw (\right,\y) -- (0.02*\left+0.98*\right,\y);
  }

}

\indexkeyword{Refinable functions} are functions
that are in a sense self-similar:
If you add shrunken translates of a refinable function in a weighted way,
then you obtain that refinable function again.
For instance, see \figref{refinement} for how a quadratic B-spline
can be decomposed into four small B-splines
and how the so called \Daubechies-2 generator function
is decomposed into four small variants of itself.
\begin{figure}
\begin{tabular}{cc}
\begin{tikzpicture}[domain=0:3,xscale=1.5,yscale=3]
  \drawframe{-0.2}{3.2}{0,0.5,...,3} {-0.1}{0.9}{0.0,0.2,0.4,0.6,0.8}
  \draw plot file {figures/RefinementBSpline.csv};
  \draw[dash pattern=on 2pt off 1pt] plot file {figures/RefinementBSpline0.csv};
  \draw[dash pattern=on 1pt off 1pt] plot file {figures/RefinementBSpline1.csv};
  \draw[dash pattern=on 2pt off 1pt] plot file {figures/RefinementBSpline2.csv};
  \draw[dash pattern=on 1pt off 1pt] plot file {figures/RefinementBSpline3.csv};
\end{tikzpicture}
&
\begin{tikzpicture}[domain=0:3,xscale=1.5,yscale=3/2.6]
  \drawframe{-0.2}{3.2}{0,0.5,...,3} {-0.6}{2}{-0.5,0,...,2}
  \draw plot file {figures/RefinementDaubechies.csv};
  \draw[dash pattern=on 2pt off 1pt] plot file {figures/RefinementDaubechies0.csv};
  \draw[dash pattern=on 1pt off 1pt] plot file {figures/RefinementDaubechies1.csv};
  \draw[dash pattern=on 2pt off 1pt] plot file {figures/RefinementDaubechies2.csv};
  \draw[dash pattern=on 1pt off 1pt] plot file {figures/RefinementDaubechies3.csv};
\end{tikzpicture}
\\
Quadratic B-spline & \Daubechies-2 wavelet generator
\end{tabular}
\caption{%
Refinement of a quadratic B-spline and the orthogonal \Daubechies-2 generator.
The black line is the refinable function
that is composed from shrunken, translated and weighted
versions of itself, displayed with dashed lines.
}
\figlabel{refinement}
\end{figure}
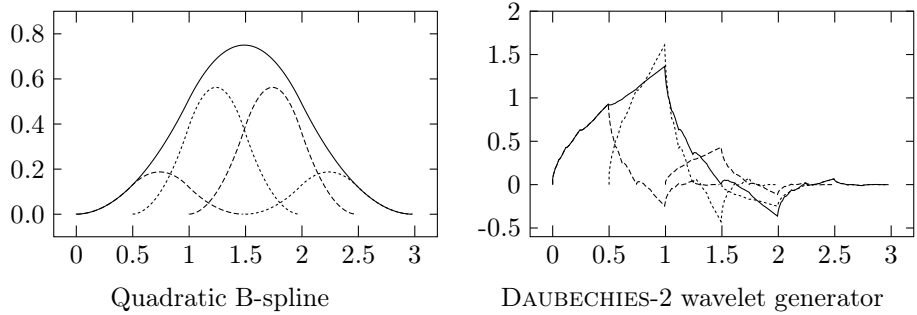

All B-splines with successive integral nodes are refinable,
but there are many more refinable functions
that did not have names before the rise of the theory of refinable functions.
In fact we can derive a refinable function from the weights
of the linear combination in the refinement under some conditions.

Refinable functions were introduced
in order to develop a theory of real wavelet functions
that complements the discrete sub-band coding theory.%
\cite{strang1997filterbanks}
Following the requirements of wavelet applications,
existing literature on wavelets focuses on refinable functions
that are $\mathcal{L}_2$-integrable
and thus have a well-defined \Fourier{} transform,
are localized (finite variance) or even better of compact support.
It is already known,
that polynomial functions are refinable as well.%
\cite{gustafson2006refine}
In this paper we want to explore in detail
the connection between polynomials and the respective weights for refinement.

Our results can be summarized as follows:
\begin{itemize}
\item Masks that sum up to a negative power of two
refine polynomials that are uniquely defined up to constant factors.
Other masks are not associated with a polynomial.
(\thmref{mask-to-polynomial})
\item For every polynomial there are infinitely many refinement masks,
and these refinement masks can be characterized in a simple form.
(\thmref{polynomial-to-mask} and \thmref{polynomial-to-all-masks})
\item There is a simple iterative algorithm for approximating
a polynomial that is associated with a mask.
(\thmref{cascade-algorithm})
\end{itemize}

\section{Main Work}

\subsection{Basics}

We start with a precise definition of refinable functions.
\begin{definition}[Refinable function]
\dfnlabel{mask}
The vector~$m$ with $m\in\R^{\Z}$
and a finite number of non-zero entries
($m\in\summablesequences{0}{\Z}$)
is called a \indexkeyword{refinement mask} for the function~$\varphi$
if
\begin{IEEEeqnarray*}{rCl}
\varphi(t)
 &=& 2 \cdot \sum_{j\in\Z} m_j \cdot \varphi(2\cdot t-j)
  \eqnlabel{refinement}
\end{IEEEeqnarray*}
holds. Vice versa the function~$\varphi$ is called
\indexkeyword{refinable} with respect to the mask~$m$.
\end{definition}

The factor~$2$ before the sum is chosen,
such that the following law (\lemref{refinable-convolution})
about convolutions holds.
Unfortunately this enforces adding or subtracting $1$ here and there
in some of the other theorems.
There seems to be no convention that asserts overall simplicity.
\begin{definition}[Convolution]
\dfnlabel{convolution}
For sequences $h$ and $g$ the convolution is defined by
\[ (h * g)_k = \sum_{j\in\Z} h_{j} \cdot g_{k-j} \]
and for real functions the convolution is defined by
\[
(\varphi * \psi)(t)
   = \int_{\R} \varphi(\tau) \cdot \psi(t-\tau) \dif \tau
\qquad.
\]
\end{definition}

\begin{lemma}
\lemlabel{refinable-convolution}
If $\varphi$ is refinable with respect to $h$
and $\psi$ is refinable with respect to $g$,
then $\varphi * \psi$ is refinable with respect to $h * g$.
\end{lemma}
For a proof see \refcite{thielemann2006matchedwavelets}.
%
For the proof of our theorems
we need two further lemmas about differentiation and integration.
\begin{lemma}
\lemlabel{derivative}
If the function~$\varphi$ is refinable with respect to mask~$m$,
then its derivative $\varphi'$ is refinable with respect to mask~$2\cdot m$.
\end{lemma}
\begin{proof}
Derive both sides of the refinement equation with respect to $t$.
\begin{IEEEeqnarray*}{rCl}
\varphi(t)
 &=& 2 \cdot \sum_{j\in\Z} m_j \cdot \varphi(2\cdot t-j) \\
\varphi'(t)
 &=& 2 \cdot \sum_{j\in\Z} m_j \cdot 2\cdot\varphi'(2\cdot t-j)
\end{IEEEeqnarray*}
\end{proof}

\begin{lemma}
\lemlabel{antiderivative}
If the function~$\varphi$ is refinable with respect to mask~$m$
and there is an antiderivative~$\Phi$ with
$\Phi(t) = \int_0^{t}\varphi(\tau)\dif\tau$,
then the antiderivative $\createfunc{t}{\Phi(t) + c}$
is refinable with respect to mask~$\frac{1}{2}\cdot m$
where the constant~$c$ must be chosen as follows:
\begin{itemize}
\item
For $\sum_j m_j \ne 1$
it must be
$c = \frac{\sum_j m_j \cdot \Phi(-j)}{1 - \sum_j m_j}$.
\item
For $\sum_j m_j = 1$
and $\sum_j m_j \cdot \Phi(-j) = 0$
the constant~$c$ can be chosen arbitrarily.
\item
For $\sum_j m_j = 1$
and $\sum_j m_j \cdot \Phi(-j) \ne 0$
there is no valid value for the constant~$c$.
\end{itemize}
These choices for $c$ are the only possible ones.
\end{lemma}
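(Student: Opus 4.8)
The plan is to integrate the refinement equation for $\varphi$ from $0$ to $t$, read off the functional equation that the particular antiderivative $\Phi$ satisfies, and then determine for which constants $c$ the shifted function $\Phi+c$ actually becomes refinable with respect to $\tfrac12 m$. Since $m$ has only finitely many non-zero entries by \dfnref{mask}, every sum over $j$ below is finite, so interchanging summation and integration is unproblematic.

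First I would integrate $\varphi(t) = 2\sum_j m_j\,\varphi(2t-j)$ over $[0,t]$, pull the finite sum out of the integral, and in each summand substitute $u = 2s-j$. This turns $\int_0^t \varphi(2s-j)\,\dif s$ into $\tfrac12\bigl(\Phi(2t-j)-\Phi(-j)\bigr)$ (valid wherever $\varphi$ is integrable, i.e. wherever $\Phi$ is defined), and collecting terms gives the identity
\[
\Phi(t) \;=\; \sum_{j} m_j\,\Phi(2t-j) \;-\; \sum_j m_j\,\Phi(-j).
\]
The inhomogeneous term $\sum_j m_j\,\Phi(-j)$ is exactly the obstruction to $\Phi$ itself satisfying the refinement equation for the mask $\tfrac12 m$, which by \dfnref{mask} reads $\Psi(t) = 2\sum_j \tfrac{m_j}{2}\,\Psi(2t-j) = \sum_j m_j\,\Psi(2t-j)$.

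Next I would substitute $\Psi(t) = \Phi(t)+c$ into $\Psi(t) = \sum_j m_j\,\Psi(2t-j)$, expand the right-hand side, and use the identity above to replace $\sum_j m_j\,\Phi(2t-j)$ by $\Phi(t) + \sum_j m_j\,\Phi(-j)$. After cancelling $\Phi(t)$ on both sides, everything collapses to the single scalar equation $c\bigl(1 - \sum_j m_j\bigr) = \sum_j m_j\,\Phi(-j)$. The three cases of the lemma are then immediate: if $\sum_j m_j \ne 1$ one divides to get the stated unique value of $c$; if $\sum_j m_j = 1$ the left-hand side vanishes, so the equation is solvable exactly when $\sum_j m_j\,\Phi(-j) = 0$, in which case every $c$ works, and otherwise no $c$ works. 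Because this scalar equation is \emph{equivalent} to $\Psi$ being refinable with respect to $\tfrac12 m$, the listed values are the only admissible ones, which settles the final sentence of the lemma.

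I do not expect a real obstacle here; the derivation is essentially forced. The only places requiring a little care are keeping the $c\sum_j m_j$ term from the expansion of $\sum_j m_j\,\Psi(2t-j)$ (an easy term to drop by accident), and noting explicitly that the change of variables and the sum–integral interchange are legitimate because $m\in\summablesequences{0}{\Z}$ and $\Phi$ is given by an honest integral of $\varphi$.
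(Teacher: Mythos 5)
Your proposal is correct and follows essentially the same route as the paper: the same change of variables $u=2s-j$, the same splitting of $\int_0^{2t-j}$ at $-j$, and the same reduction to the scalar equation $c\,(1-\sum_j m_j)=\sum_j m_j\,\Phi(-j)$, from which the three cases fall out. The only (cosmetic) difference is that you establish the functional identity for $\Phi$ first and then treat the scalar equation as an equivalence, which handles necessity and sufficiency in one pass, whereas the paper derives necessity by starting from the assumed refinability of $\Phi+c$ and disposes of sufficiency by back-substitution.
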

\begin{proof}
We start with the necessary condition; that is,
given that the antiderivative $\createfunc{t}{\Phi(t) + c}$
is refinable with respect to mask $\frac{1}{2}\cdot m$,
what are the possible integration constants?
\begin{IEEEeqnarray*}{rCl}
\Phi(t) + c
 &=& 2 \cdot \sum_{j\in\Z} \frac{1}{2}\cdot m_j
          \cdot \left(\Phi(2\cdot t-j)+c\right) \\
\Phi(t) + c\cdot(1 - \sum_{j\in\Z} m_j)
 &=& \sum_{j\in\Z} m_j \cdot \int_0^{2\cdot t-j}\varphi(\tau)\dif\tau \\
 &=& \sum_{j\in\Z} m_j \cdot \left(\int_{-j}^{2\cdot t-j}\varphi(\tau)\dif\tau + \Phi(-j)\right) \\
 &=& \sum_{j\in\Z} m_j \cdot \left(2\cdot\int_{0}^{t}\varphi(2\cdot \tau-j)\dif\tau + \Phi(-j)\right) \\
 &=& \int_{0}^{t}\left(\sum_{j\in\Z} 2\cdot m_j \cdot \varphi(2\cdot \tau-j)\right)\dif\tau + \sum_{j\in\Z} m_j \cdot \Phi(-j) \\
 &=& \int_{0}^{t} \varphi(\tau)\dif\tau + \sum_{j\in\Z} m_j \cdot \Phi(-j) \\
c\cdot(1 - \sum_{j\in\Z} m_j)
 &=& \sum_{j\in\Z} m_j \cdot \Phi(-j) \\
\end{IEEEeqnarray*}
Proof of the sufficient condition:
By substituting $c$ by the admissible values we verify,
that the antiderivative with that offset
is actually refined by $\frac{1}{2}\cdot m$.
\end{proof}

\begin{remark}
For $\sum_j m_j=1$ and $m$ and $\varphi$ with support,
that is bounded on at least one side,
the second case of \lemref{antiderivative} applies.
This means that all antiderivatives of $\varphi$
irrespective of the integration constant are refinable
with respect to $\frac{1}{2}\cdot m$.

Sketch of the proof:
Without loss of generality let $m$ and $\varphi$ have support,
that is bounded at the left.
The refinement equation implies,
that the bounds of their support are equal.
If their support is entirely on the positive real axis,
then $\forall t\le0 \  \Phi(t)=0$
and further on in all summands of $\sum_j m_j \cdot \Phi(-j)$
at least one factor is zero.
This implies $\sum_j m_j \cdot \Phi(-j) = 0$.

Now, when $\varphi$ is refined by $m$ and $k$ is an integer,
then $\varphi$ translated by $k$ is refinable with respect to $m$ translated by $k$.
This way we can reduce all $m$ and $\varphi$ with bounded support
to ones with support on the positive real axis.

\end{remark}

\subsection{Conversions between polynomials and masks}

If we generalize refinable functions to refinable distributions,
then the \Dirac{} impulse is refined by the mask~$\delta$
with $\delta_j = \begin{cases} 1 &: j=0 \\ 0 &: j\ne 0\end{cases}$
and the $k$-th derivative of the \Dirac{} impulse is refined by $2^k\cdot\delta$.
Vice versa the truncated power function
$\createfunc{t}{t_+^k}$
with $k\in\N$
and $t_+ = \begin{cases} t &: t\ge0 \\ 0 &: t<0\end{cases}$
is refined by $2^{-k-1}\cdot\delta$.
Intuitively said,
truncated power functions are antiderivatives of the \Dirac{} impulse.

Once we are thinking about truncated power functions,
we find that ordinary power functions with natural exponents are also refinable.
Then it is no longer a surprise,
that polynomial functions are refinable, too.
For example $f$ with
$f(t) = 1 + 2\cdot t + t^2$
is refined by the mask $\frac{1}{8}\cdot(3,-3,1)$:
\begin{IEEEeqnarray*}{rCl}
\noalign{$2\cdot\frac{1}{8}\cdot(3\cdot f(2t) - 3\cdot f(2t-1) + 1\cdot f(2t-2))$}
 &=& \frac{1}{4}\cdot
        (3\cdot (1 + 2\cdot 2t + (2t)^2)
       - 3\cdot (1 + 2\cdot (2t-1) + (2t-1)^2) \\
 && \qquad\qquad
       +        (1 + 2\cdot (2t-2) + (2t-2)^2)) \\
 &=& \frac{1}{4}\cdot
        (3\cdot (1 + 4t + 4t^2)
       - 3\cdot 4t^2
       +        (1 - 4t + 4t^2)) \\
 &=& 1 + 2\cdot t + t^2
\quad.
\end{IEEEeqnarray*}

Now that we have an example of a refinable polynomial function,
we like to know how we can find a mask that refines a polynomial function.
Vice versa we want to know a characterization of masks
that refine polynomial functions
and what polynomial functions can be refined by a given mask.

Before we start answering these questions
we would like to stress the difference between
a \indexkeyword{polynomial} and a \indexkeyword{polynomial function}.
\begin{definition}[Polynomial and Polynomial function]
A polynomial~$p$ of degree~$n$ is a vector from $\R^{\{0,\dots,n\}}$.
We need this for the actual computations
and for performing linear algebra.
A polynomial function~$\polyfunc{p}$ is a real function.
The refinement property is a property of real functions.
The connection between polynomial and polynomial function is
\[
\polyfunc{p}(t) = \sum_{k=0}^{n} p_k\cdot t^k
\qquad.
\]
\end{definition}

Our first theorem answers the question,
``What polynomial can be refined by a mask?''
\begin{theorem}
\thmlabel{mask-to-polynomial}
Given a mask~$m$ that sums up to $2^{-n-1}$ for a given natural number~$n$,
there is a polynomial~$p$ of degree~$n$
such that $m$ refines $\polyfunc{p}$.
With the additional condition
of the leading coefficient being $1$,
this polynomial is uniquely determined.
\end{theorem}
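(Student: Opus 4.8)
The plan is to argue by induction on the degree~$n$, using \lemref{derivative} and \lemref{antiderivative} to climb between consecutive degrees for the existence part, and to settle uniqueness by matching leading coefficients in the refinement equation directly. The one fact that does most of the work is the following observation: if $\polyfunc{q}$ is a polynomial function of degree exactly~$d$ that is refined by a mask~$\mu$, then comparing the coefficient of $t^d$ on the two sides of $\polyfunc{q}(t)=2\sum_j\mu_j\cdot\polyfunc{q}(2t-j)$ gives $1=2^{d+1}\sum_j\mu_j$, i.e.\ $\sum_j\mu_j=2^{-d-1}$. Consequently a mask summing to $2^{-n-1}$ can refine no nonzero polynomial whose degree differs from~$n$; in particular the zero polynomial is the only polynomial of degree at most $n-1$ that it refines.

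For existence I would induct on~$n$. When $n=0$ the mask sums to $\frac12$, and inserting a constant into \eqnref{refinement} shows that every constant is refined; the leading-coefficient normalization selects $\polyfunc{p}=1$. For the inductive step, assume the theorem for $n-1$ and let $m$ sum to $2^{-n-1}$, so that $2\cdot m$ sums to $2^{-n}=2^{-(n-1)-1}$. By the induction hypothesis there is a monic polynomial $\polyfunc{q}$ of degree $n-1$ refined by $2\cdot m$. Since $\sum_j(2\cdot m)_j=2^{-n}\neq 1$ for $n\ge 1$, \lemref{antiderivative} applies in its first case and yields a unique constant~$c$ such that $\createfunc{t}{\int_0^t\polyfunc{q}(\tau)\dif\tau+c}$ is refined by $\frac12\cdot(2\cdot m)=m$. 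This function is a polynomial of degree~$n$ with leading coefficient $\frac1n$; multiplying it by~$n$ (refinement is linear, so the result is still refined by~$m$) produces the desired monic polynomial $\polyfunc{p}$ of degree~$n$.

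For uniqueness, let $\polyfunc{p_1}$ and $\polyfunc{p_2}$ both be monic of degree~$n$ and refined by~$m$. By linearity of \eqnref{refinement} the difference $\polyfunc{p_1}-\polyfunc{p_2}$ is refined by~$m$, and it has degree at most $n-1$ because the leading terms cancel; by the opening observation it must be identically zero, so $p_1=p_2$. (One could equally differentiate via \lemref{derivative} and invoke the induction hypothesis at degree $n-1$, together with the fact that a constant refined by~$m$ must vanish when $\sum_j m_j=2^{-n-1}\neq 1$.)

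The only genuinely delicate point is the bookkeeping in the inductive step: one must check that $2\cdot m$ really falls into the first case of \lemref{antiderivative} — which is precisely where the hypothesis $\sum_j m_j=2^{-n-1}$ together with $n\ge1$ enters — and one must keep track of the factor $\frac1n$ introduced by integration so that the normalization is restored correctly. A computation-heavy alternative bypasses the induction altogether: writing $\polyfunc{p}(t)=\sum_{k=0}^n p_k t^k$ and equating the coefficients of $t^i$ in \eqnref{refinement} gives the triangular system $p_i\cdot(1-2^{i-n})=2^{i+1}\cdot\sum_{k=i+1}^n\binom{k}{i}\cdot\bigl(\sum_j m_j\cdot(-j)^{k-i}\bigr)\cdot p_k$, in which $p_n$ is a free parameter and each lower $p_i$ is then uniquely determined; setting $p_n=1$ delivers existence and uniqueness simultaneously.
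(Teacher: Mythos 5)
Your proof is correct, and while the existence half follows the same road as the paper, the uniqueness half and your closing alternative are genuinely different. For existence you do exactly what the paper does: induct on $n$, pass from $m$ to $2\cdot m$ (sum $2^{-n}$), invoke the induction hypothesis to get a monic $q$ of degree $n-1$, and then use the first case of \lemref{antiderivative} (applicable because $2^{-n}\ne 1$) to integrate back up, restoring monicity by rescaling with the factor $n$ — the paper divides by $Q_n$, which is the same bookkeeping. For uniqueness, however, the paper differentiates both candidates via \lemref{derivative}, applies the induction hypothesis to their derivatives, and then uses the uniqueness of the integration constant in \lemref{antiderivative} to conclude; you instead observe that the difference of two monic solutions is a polynomial of degree at most $n-1$ that is still refined by $m$ (by linearity of \eqnref{refinement}), and that your leading-coefficient identity $\sum_j\mu_j=2^{-d-1}$ forbids $m$ from refining any nonzero polynomial of degree other than $n$. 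This is shorter, needs no appeal to the induction hypothesis or to the integration constant, and isolates the one numerical fact that drives the whole theorem. Your final triangular-system argument — solving $p_i\cdot(1-2^{i-n})$ in terms of $p_{i+1},\dots,p_n$ downward from the free parameter $p_n=1$ — is a third, fully non-inductive route that delivers existence and uniqueness in one stroke and is essentially the explicit form of what the induction computes step by step; the paper's inductive route buys the reusable \lemref{antiderivative} machinery (and the explicit formula for $p_0$ used in its worked example), while yours buys brevity and a direct algorithm. All three arguments are sound.
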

\begin{proof}
We show this theorem by induction over $n$.
\begin{itemize}
\item Case $n=0$ \\
We want to show that a mask~$m$ with sum $\frac{1}{2}$
can only refine a constant polynomial.
Thus we assume contrarily that $m$
refines a polynomial with a degree~$d$ greater than zero.
In the refinement relation
\begin{IEEEeqnarray*}{rCl}
\polyfunc{p}(t)
 &=& 2\cdot \sum_{j\in\Z} m_j \cdot \polyfunc{p}(2\cdot t-j)
\end{IEEEeqnarray*}
we only consider the leading coefficient,
that is, the coefficient of $t^d$.
\begin{IEEEeqnarray*}{rCl}
p_d
 &=& 2\cdot \sum_{j\in\Z} m_j \cdot 2^d \cdot p_d \\
 &=& 2^d \cdot p_d
\end{IEEEeqnarray*}
From $d>0$ it follows that $p_d = 0$.

Thus the degree~$d$ must be zero
and by normalization it must be $p_0 = 1$.
We can easily check that this constant polynomial
is actually refined by any mask with sum~$\frac{1}{2}$.

\item Case $n>0$: Induction step \\
The induction hypothesis is that for any mask
with coefficient sum $2^{-n}$
we can determine a refining polynomial of degree~$n-1$,
that is unique when normalized so that the leading coefficient is $1$.
The induction claim is
that for a mask~$m$ with sum $2^{-n-1}$
we have a uniquely determined polynomial of degree~$n$
with leading coefficient~$1$.
We observe that $2\cdot m$ satisfies the premise of the induction hypothesis
and thus there is a polynomial~$q$ of degree~$n-1$
that is refined by $2\cdot m$
and that is unique when normalized.
Since the coefficient sum of $2\cdot m$ is at most $\frac{1}{2}$,
it is different from $1$
and thus the first case of \lemref{antiderivative} applies.
It lets us obtain the $m$-refinable polynomial~$p$ in the following way:
Let $Q$ be the antiderivative polynomial of $q$
where the constant term is zero,
then it is
\begin{IEEEeqnarray*}{r"rCl}
\forall k>0 &
  p_k &=& \frac{Q_k}{Q_n} \\
& p_0 &=& \frac{2}{Q_n\cdot(1 - 2^{-n})} \cdot
             \sum_{j\in\Z} m_j \cdot \polyfunc{Q}(-j)
\qquad.
\end{IEEEeqnarray*}

Now we turn to the question of why $p$ is uniquely determined.
Assume we have two normalized polynomial functions
$\polyfunc{p_0}$ and $\polyfunc{p_1}$
that are both refined by mask~$m$.
Then their derivatives $\polyfunc{p_0}'$ and $\polyfunc{p_1}'$
are refined by mask~$2\cdot m$.
Due to the induction hypothesis
the normalized polynomial functions
of $\polyfunc{p_0}'$ and $\polyfunc{p_1}'$ are equal.
The first case of \lemref{antiderivative} implies
that the antiderivatives with respect to
$\polyfunc{p_0}'$ and $\polyfunc{p_1}'$
have the same integration constant,
and thus $p_0=p_1$.
\end{itemize}
\mbox{} 
\end{proof}

\begin{theorem}
\thmlabel{polynomial-to-mask}
Given a polynomial~$p$ of degree~$n$,
there is a uniquely defined mask~$m$
of support in $\{0,\dots,n\}$ that refines $\polyfunc{p}$.
\end{theorem}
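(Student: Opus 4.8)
The plan is to turn the refinement equation \eqnref{refinement} into an identity between polynomials in~$t$ and to match coefficients. Write $\polyfunc{p}(t)=\sum_{k=0}^{n}p_k t^k$ with $p_n\neq0$. In
\[
\polyfunc{p}(t)=2\sum_{j=0}^{n}m_j\,\polyfunc{p}(2t-j)
\]
both sides are polynomials of degree at most~$n$, so this identity is equivalent to a square system of $n+1$ linear equations in the $n+1$ unknowns $m_0,\dots,m_n$, one for each power $t^{n-r}$ with $r=0,\dots,n$. Existence and uniqueness of the mask is precisely the assertion that this system has a unique solution, and I would establish that by exposing a triangular structure.

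First I would expand $\polyfunc{p}(2t-j)$ by the binomial theorem and collect powers of~$t$; the coefficient of $t^{n-r}$ on the right-hand side turns out to depend only on the moments $\mu_s:=\sum_{j=0}^{n}j^{s}m_j$ for $s=0,\dots,r$. Concretely, the equation coming from $t^{n-r}$ is
\[
p_{n-r}=2^{\,n-r+1}\sum_{s=0}^{r}(-1)^{s}\binom{n-r+s}{n-r}p_{n-r+s}\,\mu_s .
\]
For $r=0$ this reads $p_n=2^{\,n+1}p_n\mu_0$, which forces $\mu_0=2^{-n-1}$ because $p_n\neq0$. Substituting this back, the $s=0$ term of the general equation equals $2^{-r}p_{n-r}$, so for $r\geq1$ the equation rearranges to
\[
(1-2^{-r})\,p_{n-r}=2^{\,n-r+1}(-1)^{r}\binom{n}{r}p_n\,\mu_r+2^{\,n-r+1}\sum_{s=1}^{r-1}(-1)^{s}\binom{n-r+s}{n-r}p_{n-r+s}\,\mu_s .
\]
Since $p_n\neq0$ and $\binom{n}{r}\neq0$ for $0\leq r\leq n$, the factor multiplying $\mu_r$ never vanishes, so this relation determines $\mu_r$ uniquely from $\mu_0,\dots,\mu_{r-1}$ and the coefficients of~$p$. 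By induction on~$r$ the whole vector $(\mu_0,\dots,\mu_n)$ is uniquely determined.

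It remains to recover the mask from its moments. The identities $\mu_r=\sum_{j=0}^{n}j^{r}m_j$ for $r=0,\dots,n$ form a \person{Vandermonde} linear system with the pairwise distinct nodes $0,1,\dots,n$, hence an invertible one; so there is exactly one vector $m$ supported in $\{0,\dots,n\}$ whose moments are the prescribed $\mu_r$. Because satisfying the moment equations is equivalent to satisfying the original coefficient equations, this~$m$ is the unique mask with support in $\{0,\dots,n\}$ that refines $\polyfunc{p}$, which proves the theorem.

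I expect the only delicate point to be the bookkeeping in the middle step: carefully extracting the coefficient of $t^{n-r}$ from $2\sum_j m_j\polyfunc{p}(2t-j)$ and verifying that the factor in front of $\mu_r$ is exactly $2^{\,n-r+1}(-1)^{r}\binom{n}{r}p_n$, which is where the hypothesis $\deg\polyfunc{p}=n$ enters through $p_n\neq0$; the remainder is a triangular solve followed by one \person{Vandermonde} inversion. A more compact variant avoids the moments altogether by factoring the coefficient matrix of the original system as $\diag{2,2^{2},\dots,2^{n+1}}\cdot C\cdot V$, where $V$ is the \person{Vandermonde} matrix in the nodes $0,-1,\dots,-n$ and $C$ is anti-triangular with nonzero anti-diagonal entries $\binom{n}{r}p_n$; invertibility of each factor gives the claim at once.
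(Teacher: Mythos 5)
Your proof is correct, but it takes a genuinely different route from the one in the paper. The paper writes the refinement relation as the linear system $p = 2\cdot\shrink{2}{P m}$ with $P=(\translater{0}{p},\dots,\translater{n}{p})$ and proves invertibility of $P$ by an LU-type decomposition built from finite differences: since $\Delta^k p$ has degree $n-k$, the matrix $U_n=(p,\Delta p,\dots,\Delta^n p)$ is triangular with nonzero diagonal, and the factors $L_k$ are elementary column-difference operations. You instead pass to the moments $\mu_s=\sum_j j^s m_j$, show that the coefficient equations form a triangular system in $(\mu_0,\dots,\mu_n)$ with nonzero pivots $2^{\,n-r+1}(-1)^r\binom{n}{r}p_n$ (your coefficient extraction checks out, including $\mu_0=2^{-n-1}$, which is consistent with \thmref{mask-to-polynomial}), and then invert a \person{Vandermonde} system in the nodes $0,\dots,n$. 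This is essentially the factorization $P=A\cdot V$ into a triangular \person{Taylor}-coefficient matrix and a \person{Vandermonde} matrix that the paper attributes to King's article in its Related Work section, so you have independently rediscovered that alternative proof. The trade-off: the paper's difference scheme doubles as a cheap algorithm (back-substitution plus adjacent differences, no \person{Vandermonde} inversion), while your moment route yields the mask-sum condition $\sum_j m_j=2^{-n-1}$ as an immediate by-product and, like King's version, extends naturally to masks supported on arbitrary pairwise distinct (even non-integral) nodes, which the paper's method only reaches after replacing differences by divided differences.
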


For the proof of that theorem we introduce some matrices.
\begin{definition}
We express shrinking a polynomial by factor $k$ by the matrix~$S_k$.
\begin{IEEEeqnarray*}{rCl}
S_k &\in& \R^{\{0,\dots,n\} \times \{0,\dots,n\}} \\
S_k &=& \diag{1,k,\dots,k^n}
\end{IEEEeqnarray*}
We represent translation of a polynomial by $1$ by the matrix~$T$
and translation of a distance~$i$ by the power $T^i$.
\begin{IEEEeqnarray*}{rCl}
T &\in& \R^{\{0,\dots,n\} \times \{0,\dots,n\}} \\
(T^i)_{j,k} &=&
\begin{cases}
  \binom{k}{j} \cdot (-i)^{k-j}
    &: j \le k \\
  0 &: j > k
\end{cases}
\end{IEEEeqnarray*}
\end{definition}

The proof of \thmref{polynomial-to-mask} follows.
\begin{proof}
We define the matrix~$P$
that consists of translated polynomials as columns.
\[ P = (\translater{0}{p}, \dots, \translater{n}{p}) \]
Now computing $m$ is just a matter of solving
the simultaneous linear equations
\[ p = 2\cdot\shrink{2}{P m} \qquad. \]
We only have to show that $P$ is invertible.
We demonstrate that by doing a kind of LU decomposition,
that also yields an algorithm for actually computing $m$.
Our goal is to transform $P$ into triangular form
by successive subtractions of adjacent columns.
We define
\[ \Delta p = \translaterone{p} - p \]
what satisfies
\[ \Delta (\translater{k}{p}) = \translater{k}{\Delta p} \qquad. \]
In the first step we replace all but the first columns of $P$
by differences, yielding the matrix~$U_1$.
\begin{IEEEeqnarray*}{rCl}
U_1
 &=& ({p},
      {\Delta p},
      \translaterone{\Delta p}, \dots,
      \translater{n-1}{\Delta p}) \\
 &=& P\cdot L_1^{-1}
\end{IEEEeqnarray*}
\[
L_1^{-1} =
\begin{pmatrix}
1 & -1 &  0 & \cdots & 0 \\
0 &  1 & -1 & \cdots & 0 \\
0 &  0 &  1 & \ddots & 0 \\
\vdots & \vdots & \vdots & \ddots & \vdots \\
0 &  0 &  0 & \cdots & 1
\end{pmatrix}
\qquad
L_1 =
\begin{pmatrix}
1 &  1 &  1 & \cdots & 1 \\
0 &  1 &  1 & \cdots & 1 \\
0 &  0 &  1 & \cdots & 1 \\
\vdots & \vdots & \vdots & \ddots & \vdots \\
0 &  0 &  0 & \cdots & 1
\end{pmatrix}
\]
In the second step we replace all but the first two columns of $U_1$
by differences (of the contained differences), yielding the matrix~$U_2$.
\begin{IEEEeqnarray*}{rCl}
U_2
 &=& ({p},
      {\Delta p},
      {\Delta^2 p},
      \translaterone{\Delta^2 p}, \dots,
      \translater{n-2}{\Delta^2 p}) \\
 &=& U_1\cdot L_2^{-1}
\end{IEEEeqnarray*}
\[
L_2^{-1} =
\begin{pmatrix}
1 &  0 &  0 & \cdots & 0 \\
0 &  1 & -1 & \cdots & 0 \\
0 &  0 &  1 & \ddots & 0 \\
\vdots & \vdots & \vdots & \ddots & \vdots \\
0 &  0 &  0 & \cdots & 1
\end{pmatrix}
\qquad
L_2 =
\begin{pmatrix}
1 &  0 &  0 & \cdots & 0 \\
0 &  1 &  1 & \cdots & 1 \\
0 &  0 &  1 & \cdots & 1 \\
\vdots & \vdots & \vdots & \ddots & \vdots \\
0 &  0 &  0 & \cdots & 1
\end{pmatrix}
\]
We repeat this procedure $n$ times, until we get
\begin{IEEEeqnarray*}{rCl}
U_n
 &=& ({p},
      {\Delta p},
      {\Delta^2 p}, \dots,
      {\Delta^n p}) \\
P &=& U_n \cdot L_n \cdot \cdots \cdot L_2 \cdot L_1
\qquad.
\end{IEEEeqnarray*}
Since the $k$-th difference of a polynomial of degree~$n$
is a polynomial of degree~$n-k$,
the matrix~$U_n$ is triangular and invertible.
Thus $P$ is invertible.
We get
\begin{IEEEeqnarray*}{rCl}
m &=& \frac{1}{2}\cdot P^{-1} \cdot \shrink{\frac{1}{2}}{p} \\
  &=& \frac{1}{2}\cdot
      L_1^{-1} \cdot L_2^{-1} \cdot \cdots \cdot L_n^{-1} \cdot
      U_n^{-1} \cdot \shrink{\frac{1}{2}}{p}
\quad,
\end{IEEEeqnarray*}
where the product $U_n^{-1} \cdot \shrink{\frac{1}{2}}{p}$
can be computed by back-substitution
and the multiplications with $L_j^{-1}$
mean computing some differences of adjacent vector elements.
\end{proof}

\begin{theorem}
\thmlabel{polynomial-to-all-masks}
If $p$ is a polynomial of degree~$n$
and $m$ is a mask that refines $\polyfunc{p}$,
then for every mask~$v$
the mask $m + v * (1,-1)^{n+1}$ refines $\polyfunc{p}$ as well.
Only masks of this kind refine $\polyfunc{p}$.
The expression $(1,-1)^{n+1}$ denotes the $(n+1)$-th convolution
of the mask $\delta_0 - \delta_1$,
that is $(1,-1)^0 = (1)$, $(1,-1)^1 = (1,-1)$, $(1,-1)^2 = (1,-2,1)$ and so on.
\end{theorem}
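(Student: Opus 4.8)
The plan is to reduce both halves of the theorem to one clean statement about the \emph{moments} of a mask. For a finitely supported sequence $w\in\R^{\Z}$ write $\mu_i(w)=\sum_{j\in\Z}w_j\cdot j^i$ for $i\in\N$, and call $w$ \emph{annihilating for $\polyfunc{p}$} if $\sum_{j\in\Z}w_j\cdot\polyfunc{p}(2\cdot t-j)=0$ for all $t\in\R$. Because the constant $2$ in the refinement equation is nonzero and $m$ already refines $\polyfunc{p}$, a mask $m'$ refines $\polyfunc{p}$ exactly when $m'-m$ is annihilating, and $m+v*(1,-1)^{n+1}$ refines $\polyfunc{p}$ exactly when $v*(1,-1)^{n+1}$ is annihilating. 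Hence everything follows once we prove
\[
w\text{ is annihilating for }\polyfunc{p}\ \Longleftrightarrow\ \mu_0(w)=\dots=\mu_n(w)=0\ \Longleftrightarrow\ w=v*(1,-1)^{n+1}\text{ for some mask }v\quad.
\]

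First I would establish the left equivalence. Expanding $\polyfunc{p}(2\cdot t-j)=\sum_{k=0}^{n}p_k\cdot(2t-j)^k$ in powers of $j$ and collecting terms gives $\sum_{j\in\Z}w_j\cdot\polyfunc{p}(2t-j)=\sum_{i=0}^{n}(-1)^i\cdot\mu_i(w)\cdot c_i(t)$, where $c_i(t)=\sum_{k\ge i}p_k\binom{k}{i}(2t)^{k-i}$ is a polynomial in $t$ of degree exactly $n-i$ with leading coefficient $p_n\binom{n}{i}2^{n-i}\ne0$; here the hypothesis $\deg p=n$, i.e.\ $p_n\ne0$, is used in an essential way. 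The direction ``$\Leftarrow$'' is then immediate, and for ``$\Rightarrow$'' one compares coefficients of $t^n,t^{n-1},\dots,t^0$ in the polynomial identity $\sum_{i=0}^{n}(-1)^i\mu_i(w)c_i(t)\equiv0$: the coefficient of $t^{n-r}$ receives contributions only from $c_0,\dots,c_r$, so once $\mu_0(w)=\dots=\mu_{r-1}(w)=0$ are known it reads $(-1)^r\mu_r(w)\cdot p_n\binom{n}{r}2^{n-r}=0$, forcing $\mu_r(w)=0$. This triangular elimination of the moments is the technical core of the argument and the step I expect to be the main obstacle to make fully rigorous; the rest is bookkeeping.

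Next I would treat the right equivalence, which is the standard discrete Taylor / $z$-transform fact. Write the symbol $\widehat{w}(z)=\sum_{j\in\Z}w_j z^j$, a Laurent polynomial since $w$ is finitely supported. Since the falling factorials $(j)_0,\dots,(j)_n$ span the same space of polynomials in $j$ as $1,j,\dots,j^n$ and $\widehat{w}^{(i)}(1)=\sum_{j}w_j\,(j)_i$, the conditions $\mu_0(w)=\dots=\mu_n(w)=0$ are equivalent to $\widehat{w}$ having a zero of order at least $n+1$ at $z=1$. The mask $(1,-1)^{n+1}$ has symbol $(1-z)^{n+1}$, so this is exactly divisibility of $\widehat{w}(z)$ by $(1-z)^{n+1}$ in the ring of Laurent polynomials; writing $\widehat{w}(z)=\widehat{v}(z)\cdot(1-z)^{n+1}$ and translating back yields a finitely supported mask $v$ with $w=v*(1,-1)^{n+1}$, and the converse implication is obvious.

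Finally I would assemble the two halves. Given an arbitrary mask $v$, the symbol of $v*(1,-1)^{n+1}$ is divisible by $(1-z)^{n+1}$, so its moments up to order $n$ vanish, so it is annihilating for $\polyfunc{p}$, so $m+v*(1,-1)^{n+1}$ refines $\polyfunc{p}$. Conversely, if $m'$ refines $\polyfunc{p}$ then $m'-m$ is annihilating, hence equals $v*(1,-1)^{n+1}$ for some mask $v$, i.e.\ $m'=m+v*(1,-1)^{n+1}$. As a consistency check, if $m'$ additionally has support in $\{0,\dots,n\}$ then $v*(1,-1)^{n+1}$ must vanish outside $\{0,\dots,n\}$ while $(1,-1)^{n+1}$ already has support $\{0,\dots,n+1\}$; a short support argument gives $v=0$, which recovers the uniqueness statement of \thmref{polynomial-to-mask}.
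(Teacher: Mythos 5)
Your proof is correct, but for the ``only masks of this kind'' direction it takes a genuinely different route from the paper's. The forward implication is morally the same in both: the paper phrases it as $C_{(1,-1)}^{n+1}\cdot p=0$ (the $(n+1)$-st finite difference annihilates every degree-$n$ polynomial), while you reach it via divisibility of the symbol by $(1-z)^{n+1}$ and vanishing moments. For the converse, however, the paper does not characterize the annihilator of $\polyfunc{p}$ at all: it performs Laurent polynomial division of an arbitrary refining mask $h_1$ and of $m$ by $(1,-1)^{n+1}$, observes that both remainders are supported in $\{0,\dots,n\}$ and still refine $\polyfunc{p}$ by the forward implication, and then invokes the uniqueness part of \thmref{polynomial-to-mask} to conclude the remainders coincide. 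You instead prove from scratch that a mask annihilates $\polyfunc{p}$ if and only if its moments $\mu_0,\dots,\mu_n$ vanish (the triangular elimination, which correctly uses $p_n\ne0$) if and only if its symbol is divisible by $(1-z)^{n+1}$. Both arguments are sound. The paper's is shorter because it reuses \thmref{polynomial-to-mask}; yours is self-contained, identifies the annihilator of $\polyfunc{p}$ with the classical vanishing-moment/sum-rule condition at $z=1$, and --- as your final support argument shows --- recovers the uniqueness statement of \thmref{polynomial-to-mask} as a corollary rather than consuming it as an input.
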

\newcommand\convpoly[2]{C_{#1}\cdot #2}
\newcommand\convpolypower[3]{C_{#1}^{#2}\cdot #3}
\begin{proof}
We denote the convolution of a mask~$m$ with a polynomial
by the matrix~$C_m$.
\begin{IEEEeqnarray*}{rCl}
C_m &\in& \R^{\{0,\dots,n\} \times \{0,\dots,n\}} \\
C_m &=& \sum_{i=\il}^{\ir} m_i \cdot T^i
\end{IEEEeqnarray*}
The refinement equation can be written
\[
p = 2 \cdot \shrink{2}{\convpoly{m}{p}}
\qquad.
\]
Since $C_m$ is a \Laurent{} matrix polynomial expression with respect to $T$,
it holds
\begin{IEEEeqnarray*}{rCl}
C_{h + g} &=& C_h + C_g \\
C_{h * g} &=& C_h \cdot C_g
\qquad.
\end{IEEEeqnarray*}

\pagebreak[3]
\begin{IEEEeqnarray*}{rCl"s}
\noalign{$2 \cdot \shrink{2}{\convpoly{m + v * (1,-1)^{n+1}}{p}}$}
 &=& \shrink{2}{(2 \cdot \convpoly{m}{p})} +
     \shrink{2}{(2 \cdot \convpoly{v * (1,-1)^{n+1}}{p})} \\
 &=& p + \shrink{2}{(2 \cdot \convpoly{v}{(\convpolypower{(1,-1)}{n+1}{p})})} \\
\noalign{$(n+1)$-th difference of an $n$-degree polynomial vanishes:
   $\convpolypower{(1,-1)}{n+1}{p} = 0$}
 &=& p
\end{IEEEeqnarray*}

We still have to show,
that refining masks of $\polyfunc{p}$
always have the form $m + v * (1,-1)^{n+1}$.
Consider a mask~$h_1$ that refines $\polyfunc{p}$.
By computing the \Laurent{} polynomial division remainder
with respect to the divisor $(1,-1)^{n+1}$
we can reduce $h_1$ to a mask~$h_0$ that has support in $\{0,\dots,n\}$
and we can reduce $m$ to a mask~$g$ with support in $\{0,\dots,n\}$, too.
\begin{IEEEeqnarray*}{rCl}
m   &=& g   + v_0 * (1,-1)^{n+1} \\
h_1 &=& h_0 + v_1 * (1,-1)^{n+1}
\end{IEEEeqnarray*}
From the above considerations we conclude
that both $g$ and $h_0$ refine $\polyfunc{p}$
and the uniqueness property in \thmref{polynomial-to-mask}
eventually gives us $g = h_0$, thus
\[ h_1 = m + (v_1-v_0) * (1,-1)^{n+1} \qquad.\]

\end{proof}

\begin{remark}
By adding terms of the form $v * (1,-1)^{n+1}$
we can shift the support of a mask
and still refine the same polynomial.
\end{remark}

\subsection{Example}

For better comprehension of the theorems of the previous section
let us examine a longer example.
We would like to illustrate that the same refinement mask
can refine different functions.
However, if we restrict the function class to,
say, continuous compactly supported functions or to polynomial functions,
then a refinement mask is associated with a unique function.
We would like to compare a continuous compactly supported function
with a polynomial function,
both being refinable with respect to the same mask.
Unfortunately this is not possible,
since for the former type of functions we need masks with sum~$1$,
whereas for polynomial functions we need masks with sums
that are powers of two that are smaller than $1$.
So, we are going to compare
antiderivatives of the quadratic B-spline
and polynomial functions that are refinable with respect to the masks
$
\frac{1}{16}\cdot(1,3,3,1),
\frac{1}{32}\cdot(1,3,3,1),
\frac{1}{64}\cdot(1,3,3,1)
$ according to \lemref{antiderivative}.

\begin{figure}
\begin{tabular}{rr}
\begin{tikzpicture}[domain=0:3,xscale=1.5,yscale=3/1.2]
  \drawframe{-0.2}{3.2}{0,0.5,...,3} {-0.1}{1.1}{0.0,0.2,0.4,0.6,0.8,1.0}
  \draw plot file {figures/RefinementBSpline1Int.csv};
  \draw[dash pattern=on 2pt off 1pt] plot file {figures/RefinementBSpline1Int0.csv};
  \draw[dash pattern=on 1pt off 1pt] plot file {figures/RefinementBSpline1Int1.csv};
  \draw[dash pattern=on 2pt off 1pt] plot file {figures/RefinementBSpline1Int2.csv};
  \draw[dash pattern=on 1pt off 1pt] plot file {figures/RefinementBSpline1Int3.csv};
\end{tikzpicture}
&
\begin{tikzpicture}[domain=0:3,xscale=1.5,yscale=3/1.2]
  \drawframe{-0.2}{3.2}{0,0.5,...,3} {-0.1}{1.1}{0.0,0.2,0.4,0.6,0.8,1.0}
  \draw plot file {figures/Refinement0Polynomial.csv};
  \draw[dash pattern=on 2pt off 1pt] plot file {figures/Refinement0Polynomial0.csv};
  \draw[dash pattern=on 1pt off 1pt] plot file {figures/Refinement0Polynomial1.csv};
  \draw[dash pattern=on 2pt off 1pt] plot file {figures/Refinement0Polynomial2.csv};
  \draw[dash pattern=on 1pt off 1pt] plot file {figures/Refinement0Polynomial3.csv};
\end{tikzpicture}
\\
\begin{tikzpicture}[domain=0:3,xscale=1.5,yscale=3/1.8]
  \drawframe{-0.2}{3.2}{0,0.5,...,3} {-0.1}{1.7}{0.0,0.5,1.0,1.5}
  \draw plot file {figures/RefinementBSpline2Int.csv};
  \draw[dash pattern=on 2pt off 1pt] plot file {figures/RefinementBSpline2Int0.csv};
  \draw[dash pattern=on 1pt off 1pt] plot file {figures/RefinementBSpline2Int1.csv};
  \draw[dash pattern=on 2pt off 1pt] plot file {figures/RefinementBSpline2Int2.csv};
  \draw[dash pattern=on 1pt off 1pt] plot file {figures/RefinementBSpline2Int3.csv};
\end{tikzpicture}
&
\begin{tikzpicture}[domain=0:3,xscale=1.5,yscale=3/3.4]
  \drawframe{-0.2}{3.2}{0,0.5,...,3} {-1.7}{1.7}{-1,0,1}
  \draw plot file {figures/Refinement1Polynomial.csv};
  \draw[dash pattern=on 2pt off 1pt] plot file {figures/Refinement1Polynomial0.csv};
  \draw[dash pattern=on 1pt off 1pt] plot file {figures/Refinement1Polynomial1.csv};
  \draw[dash pattern=on 2pt off 1pt] plot file {figures/Refinement1Polynomial2.csv};
  \draw[dash pattern=on 1pt off 1pt] plot file {figures/Refinement1Polynomial3.csv};
\end{tikzpicture}
\\
\begin{tikzpicture}[domain=0:3,xscale=1.5,yscale=3/1.8]
  \drawframe{-0.2}{3.2}{0,0.5,...,3} {-0.1}{1.7}{0.0,0.5,1.0,1.5}
  \draw plot file {figures/RefinementBSpline3Int.csv};
  \draw[dash pattern=on 2pt off 1pt] plot file {figures/RefinementBSpline3Int0.csv};
  \draw[dash pattern=on 1pt off 1pt] plot file {figures/RefinementBSpline3Int1.csv};
  \draw[dash pattern=on 2pt off 1pt] plot file {figures/RefinementBSpline3Int2.csv};
  \draw[dash pattern=on 1pt off 1pt] plot file {figures/RefinementBSpline3Int3.csv};
\end{tikzpicture}
&
\begin{tikzpicture}[domain=0:3,xscale=1.5,yscale=3/2.8]
  \drawframe{-0.2}{3.2}{0,0.5,...,3} {-0.1}{2.7}{0,1,2}
  \draw plot file {figures/Refinement2Polynomial.csv};
  \draw[dash pattern=on 2pt off 1pt] plot file {figures/Refinement2Polynomial0.csv};
  \draw[dash pattern=on 1pt off 1pt] plot file {figures/Refinement2Polynomial1.csv};
  \draw[dash pattern=on 2pt off 1pt] plot file {figures/Refinement2Polynomial2.csv};
  \draw[dash pattern=on 1pt off 1pt] plot file {figures/Refinement2Polynomial3.csv};
\end{tikzpicture}
\end{tabular}
\caption{%
Refinement of antiderivatives of the quadratic B-spline (left column)
and polynomial functions (right column)
with respect to the masks $
\frac{1}{16}\cdot(1,3,3,1),
\frac{1}{32}\cdot(1,3,3,1),
\frac{1}{64}\cdot(1,3,3,1)$
(first to last row).
The meaning of the lines is the same as in \figref{refinement}.
}
\figlabel{polynomial-refinement}
\end{figure}
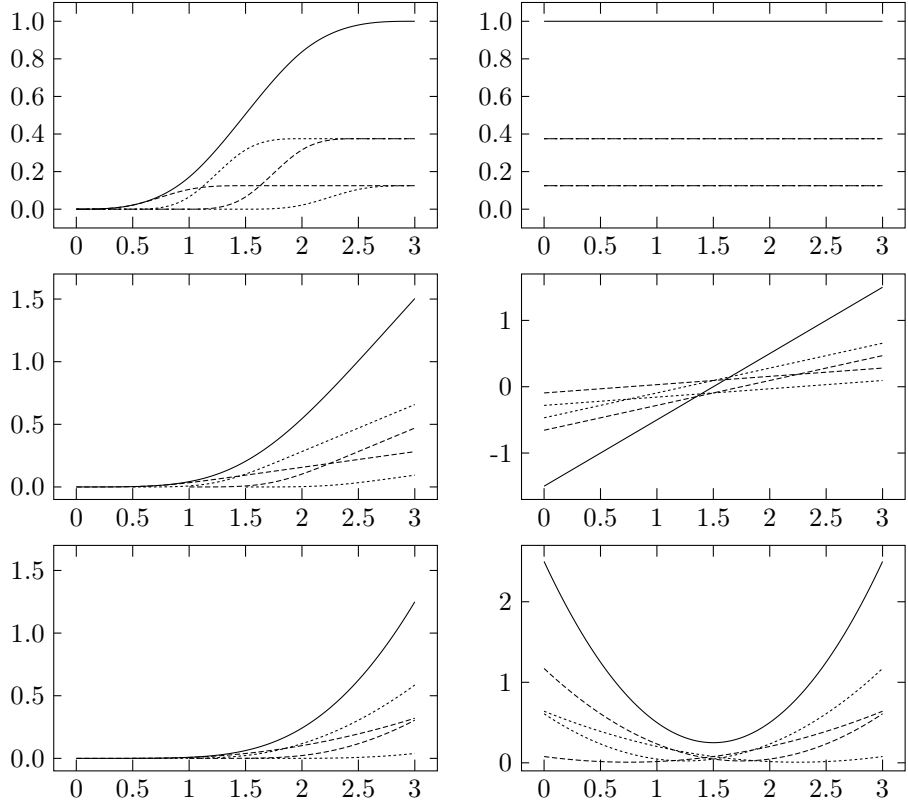
\figref{polynomial-refinement} shows the refinable functions.
Since the quadratic B-spline as in \figref{refinement}
is refinable with respect to the mask $\frac{1}{8}\cdot(1,3,3,1)$,
its antiderivative as in the top-left plot in \figref{polynomial-refinement}
is refined by $\frac{1}{16}\cdot(1,3,3,1)$.
The sum of this mask is $\frac{1}{2}$
and thus the only refinable polynomial for that mask is a constant polynomial.
We normalize it to $1$.
This is shown in the top-right plot of \figref{polynomial-refinement}.

The second row is associated with mask $\frac{1}{32}\cdot(1,3,3,1)$.
We get the refined polynomial by integrating the function $t \mapsto 1$
as in the proof of \thmref{mask-to-polynomial}:
\begin{IEEEeqnarray*}{rCl}
\polyfunc{Q}(t) &=& t \\
p_1 &=& 1 \qquad
p_0 = \frac{2}{1/2} \cdot \frac{1}{32}\cdot(1\cdot0 + 3\cdot(-1) + 3\cdot(-2) + 1\cdot(-3))
    = -\frac{3}{2} \\
\polyfunc{p}(t) &=& -\frac{3}{2} + t
\end{IEEEeqnarray*}
That is, $t \mapsto -\frac{3}{2} + t$
is refined by $\frac{1}{32}\cdot(1,3,3,1)$.
We repeat this procedure in order to get to the last row
of \figref{polynomial-refinement}.
We start with the antiderivative of $t \mapsto -\frac{3}{2} + t$:
\begin{IEEEeqnarray*}{rCl}
\polyfunc{Q}(t) &=& -\frac{3\cdot t}{2} + \frac{t^2}{2} \\
p_2 &=& 1 \qquad
p_1 = -3 \qquad
p_0 = \frac{2}{1/2\cdot 3/4} \cdot
      \frac{1}{64}\cdot(1\cdot0 + 3\cdot 2 + 3\cdot 5 + 1\cdot 9)
    = \frac{5}{2} \\
\polyfunc{p}(t) &=& \frac{5}{2} - 3\cdot t + t^2
\end{IEEEeqnarray*}

We want to check whether the obtained polynomial function
$t \mapsto \frac{5}{2} - 3\cdot t + t^2$
is actually refined by $\frac{1}{64}\cdot(1,3,3,1)$
according to \eqnref{refinement}:
\begin{IEEEeqnarray*}{rCl"rCl}
\polyfunc{p}(2t-0)   &=& \frac{ 5}{2} -  6\cdot t + 4\cdot t^2
&
1\cdot\polyfunc{p}(2t-0)   &=& \frac{ 5}{2} -  6\cdot t + 4\cdot t^2
\\
\polyfunc{p}(2t-1) &=& \frac{13}{2} - 10\cdot t + 4\cdot t^2
&
3\cdot\polyfunc{p}(2t-1) &=& \frac{39}{2} - 30\cdot t + 12\cdot t^2
\\
\polyfunc{p}(2t-2) &=& \frac{25}{2} - 14\cdot t + 4\cdot t^2
&
3\cdot\polyfunc{p}(2t-2) &=& \frac{75}{2} - 42\cdot t + 12\cdot t^2
\\
\polyfunc{p}(2t-3) &=& \frac{41}{2} - 18\cdot t + 4\cdot t^2
&
1\cdot\polyfunc{p}(2t-3) &=& \frac{41}{2} - 18\cdot t + 4\cdot t^2
\\
\hline
&& & && 80 - 96\cdot t + 32\cdot t^2
\end{IEEEeqnarray*}
\[
\frac{2}{64}\cdot(80 - 96\cdot t + 32\cdot t^2)
 = \frac{5}{2} - 3\cdot t + t^2
\]
That is, the mask actually refines the polynomial function.

In the next step of our example
we want to determine refinement masks for our polynomial functions
by means of \thmref{polynomial-to-mask}.
However, it is already clear
that we will not get the mask $\frac{1}{64}\cdot(1,3,3,1)$ as a result
because \thmref{polynomial-to-mask} only promises
a mask of size~3 for a quadratic polynomial.
Nonetheless this smaller mask should be compatible to the original one
in the sense of \thmref{polynomial-to-all-masks}.
\begin{IEEEeqnarray*}{rCl}
\polyfunc{p}(t)
 &=& \frac{5}{2} - 3\cdot t + t^2 \\
\polyfunc{\Delta p}(t)
 &=& \polyfunc{p}(t-1) - \polyfunc{p}(t) \\
 &=& \left(\frac{5}{2} - 3\cdot(t-1) + (t-1)^2\right)
   - \left(\frac{5}{2} - 3\cdot t + t^2\right) \\
 &=& 4 - 2\cdot t \\
\polyfunc{\Delta^2 p}(t)
 &=& \polyfunc{\Delta p}(t-1) - \polyfunc{\Delta p}(t) \\
 &=& \left(4 - 2\cdot (t-1)\right)
   - \left(4 - 2\cdot t\right) \\
 &=& 2
\end{IEEEeqnarray*}
\begin{IEEEeqnarray*}{rCl}
U_2 &=&
\left(p, \Delta p, \Delta^2 p \right)
=
\begin{pmatrix}
\frac{5}{2} & 4 & 2 \\
-3 & -2 & 0 \\
1 & 0 & 0
\end{pmatrix}
\\
m &=& \frac{1}{2} \cdot
      L_1^{-1} \cdot L_2^{-1} \cdot U_2^{-1} \cdot \shrink{\frac{1}{2}}{p}
   =  L_1^{-1} \cdot L_2^{-1} \cdot U_2^{-1} \cdot
      \frac{1}{8} \cdot \begin{pmatrix} 10 \\ -6 \\ 1 \end{pmatrix} \\
  &=& L_1^{-1} \cdot L_2^{-1} \cdot
      \frac{1}{32} \cdot \begin{pmatrix} 4 \\ 6 \\ 3 \end{pmatrix}
   =  L_1^{-1} \cdot
      \frac{1}{32} \cdot \begin{pmatrix} 4 \\ 3 \\ 3 \end{pmatrix} \\
  &=& \frac{1}{32} \cdot \begin{pmatrix} 1 \\ 0 \\ 3 \end{pmatrix} \\
\end{IEEEeqnarray*}
We leave it to the reader to verify
that this mask actually refines our polynomial.

The last thing we want to check is,
that the difference between the short mask and the original mask
is a convolutional multiple of $(1,-1)^3$
as stated by \thmref{polynomial-to-all-masks}:
\[
\frac{1}{64}\cdot(1,3,3,1) - \frac{1}{32}\cdot(1,0,3,0)
 = \frac{1}{64}\cdot(-1,3,-3,1)
 = -\frac{1}{64}\cdot(1,-1)^3
\quad.
\]

\subsection{The cascade algorithm}

\seclabel{cascade-algorithm}

We want to close the section on the theoretical results
with an alternative way to compute the polynomial that is refined by a mask.
It is an iterative algorithm
for the approximate computation of a polynomial,
and it is analogous to the \indexkeyword{cascade algorithm}
known for refinable functions of bounded support.%
\cite{strang1996cascade}
The refinement relation
\begin{IEEEeqnarray*}{rCl}
{p} &=& 2 \cdot \shrink{2}{C_m \cdot p}
\end{IEEEeqnarray*}
is interpreted as a recursively defined function sequence with
\begin{IEEEeqnarray*}{rCl}
{p_{j+1}} &=& 2 \cdot \shrink{2}{C_m \cdot p_j}
\quad.
\end{IEEEeqnarray*}
This iteration is in fact the vector iteration method
for computing the eigenvector that corresponds to the largest eigenvalue.
\begin{theorem}
\thmlabel{cascade-algorithm}
Given a mask~$m$ that sums up to $2^{-n-1}$ for a given natural number~$n$
and a starting polynomial~$p_0$ of degree~$n$
that is not orthogonal to the refined polynomial,
the recursion
\[ {p_{j+1}} = 2 \cdot \shrink{2}{C_m \cdot p_j} \]
converges and the limit polynomial $\lim_{j\to\infty} p_j$ is refined by $m$.

An appropriate choice for $p_0$ is $(0,\dots,0,1)$.
\end{theorem}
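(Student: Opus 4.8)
My plan is to read the recursion as the classical power (vector) iteration for the fixed linear map $A := 2\cdot S_2\cdot C_m$ on $\R^{\{0,\dots,n\}}$, where $S_2$ is the shrinking matrix of the proof of \thmref{polynomial-to-mask} and $C_m$ the convolution matrix of the proof of \thmref{polynomial-to-all-masks}. Then $p_{j+1}=A\cdot p_j$, so $p_j=A^j\cdot p_0$, and the matrix form of the refinement equation $p=2\cdot\shrink{2}{C_m\cdot p}$ recorded just above the theorem says exactly that the refined polynomial is a fixed point of $A$, i.e.\ an eigenvector for the eigenvalue~$1$.

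The first real step is to pin down the spectrum of $A$. Each $T^i$ is upper triangular with all diagonal entries equal to~$1$, hence $C_m=\sum_i m_i\cdot T^i$ is upper triangular with constant diagonal $\sum_i m_i=2^{-n-1}$; multiplying on the left by the diagonal matrix $2\cdot S_2=\diag{2,2^2,\dots,2^{n+1}}$ preserves upper triangularity, so the eigenvalues of $A$ are just its diagonal entries $2^{k-n}$ for $k=0,\dots,n$, that is $2^{-n},2^{1-n},\dots,\tfrac12,1$. These are pairwise distinct, so $A$ is diagonalizable; the eigenvalue~$1$ is simple and all other eigenvalues lie in $(0,\tfrac12]$. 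By \thmref{mask-to-polynomial} the one-dimensional eigenspace for the eigenvalue~$1$ is spanned by the $m$-refinable polynomial~$p$ of degree~$n$ with leading coefficient~$1$.

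Now the convergence is the textbook power-iteration argument. I would fix an eigenbasis $v_0,\dots,v_n$ of $A$ with $A v_k=2^{k-n}v_k$ and $v_n=p$. By the same triangular reasoning each $v_k$ with $k<n$ may be taken of degree strictly less than~$n$, so in the expansion $p_0=\sum_{k=0}^n c_k v_k$ the coefficient $c_n$ is precisely the leading coefficient of~$p_0$. Then
\[
p_j = A^j p_0 = \sum_{k=0}^{n} c_k\cdot 2^{(k-n)j}\cdot v_k \;\longrightarrow\; c_n\cdot p
\qquad(j\to\infty),
\]
since $2^{(k-n)j}\to 0$ for $k<n$ while the $k=n$ term stays constant, and the convergence is geometric. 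The hypothesis that $p_0$ is ``not orthogonal to the refined polynomial'' is exactly the requirement $c_n\neq0$; under it the limit $c_n\cdot p$ is a non-zero multiple of~$p$, and, the refinement relation being linear and homogeneous, it is again refined by~$m$. For the concrete choice $p_0=(0,\dots,0,1)$ one gets $c_n=1$, so the limit is $p$ itself.

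The one point that needs care is the meaning of the non-degeneracy condition: because $A$ is not symmetric, the relevant requirement is non-orthogonality of $p_0$ to the dominant \emph{left} eigenvector, which the triangular form identifies with the ``leading coefficient'' functional $(0,\dots,0,1)$, so in the end the condition just says that $p_0$ genuinely has degree~$n$. Everything else reduces to triangularity of $A$, distinctness of its diagonal entries, and the standard behaviour of $A^j$ on an eigenbasis.
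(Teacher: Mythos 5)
Your proof is correct and follows essentially the same route as the paper: both identify the iteration matrix $2\cdot\shrink{2}{C_m}$ as upper triangular with diagonal entries $2^{k-n}$, so that $1$ is a simple, dominant eigenvalue whose eigenvector is the refined polynomial, and then invoke power iteration. You actually supply more detail than the paper does, in particular the eigenbasis expansion and the identification of the non-orthogonality hypothesis with the non-vanishing of the leading coefficient of $p_0$ (non-orthogonality to the dominant left eigenvector), which the paper leaves implicit in its appeal to "the conditions for the vector iteration method."
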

\begin{proof}
The matrix~$\shrink{2}{C_m}$ expands to
\begin{IEEEeqnarray*}{rCl}
\shrink{2}{C_m} &=&
\begin{cases}
  2^{j} \cdot \binom{k}{j} \cdot
     \sum_{i=\il}^{\ir} (-i)^{k-j} \cdot m_i
    &: j \le k \\
  0 &: j > k
\end{cases}
\end{IEEEeqnarray*}
and thus is of upper triangular shape.
This implies that the diagonal elements $2^j \cdot \sum_{i=\il}^{\ir} m_i$
for $j\in\{0,\dots,n\}$ are the eigenvalues.
Because the mask sums up to $2^{-n-1}$
the eigenvalues of $2\cdot \shrink{2}{C_m}$ are $\{1,\dots,2^{-n}\}$.
That is, the largest eigenvalue is 1 and it is isolated.
These are the conditions for the vector iteration method,
consequently the iteration converges to a vector
that is a fixed point of the refinement operation.
\end{proof}

\begin{remark}
The eigenvectors of the eigenvalues are the respective derivatives
of the main refinable polynomial.
\end{remark}

\section{Implementation}

The presented conversions from masks to polynomials and back
are implemented in the functional programming language Haskell
as module \texttt{MathObj.RefinementMask2}
of the NumericPrelude project\cite{thurston2010numeric-prelude-0.2}.
However, note that the definition of the Haskell functions
slightly differ from this paper,
since the factor $2$ must be part of the mask.

\section{Related work}

So far, refinable functions were mostly explored
in the context of wavelet theory.
In this context an important problem
was to design refinement masks that lead
to smooth finitely supported refinable functions.%
\cite{villemoes1993sobolev,villemoes1994regularity}
It was shown that smoothness can be estimated the following way:
Decompose the refinement mask~$m$ into the form $(1,1)^n * v$,
where $n$ is chosen maximally.
According to \lemref{refinable-convolution} this corresponds
to a convolution of functions.
However, strictly speaking, $v$ corresponds to a distribution.
The exponent $n$ represents the order of a B-spline
and is responsible for the smoothness of the refinable function,
whereas for $v$ there is an eigenvalue problem,
where the largest eigenvalue determines
how much the smoothness of the B-spline is reduced.

The cascade algorithm\cite{strang1996cascade}
was developed in order to compute numerical approximations
to refinable functions.
A combination of the cascade algorithm and \lemref{refinable-convolution}
was used by \refcite{villiers2000waveletinteger}
for computing scalar products and other integrals of products
of refinable functions.
This is required for solving partial differential equations
using a wavelet \person{\foreignlanguage{russian}{Gale0rkin}} approach.

Discrete wavelet functions in a multiresolution analysis
are defined in terms of refinable functions,
but were not considered refinable functions at first.
However in \refcite{strang1999infinitemask} it is shown,
that wavelets are refinable with respect to infinite masks.
The trick is to use polynomial division
for dividing the wavelet masks of adjacent scales:
If $\varphi$ is refinable with respect to mask~$h$,
and $\psi$ is a wavelet with respect to mask~$g$ and generator $\varphi$,
then $\psi$ is refinable with respect to $\frac{g \uparrow 2}{g}\cdot h$,
where $g \uparrow 2$ is $g$ upsampled by a factor of 2.

Although polynomial functions are not in the main focus
of the research on refinable functions,
we got to know several discoveries of this relation
after uploading our work to arXiv.
The thesis~\refcite{malone2000dilation}
is the first reference known to us
that explains how to obtain a polynomial function that is refined by a mask.
The approach in this thesis is also based on polynomial differentiation.

The first source known to us that describes the opposite way,
i.e. how to find a refining mask for a polynomial function,
is \refcite{king2011refinablepolynomial}.%
\footnote{It is written in 2011, but was already presented in talks in 2003.}
In terms of our matrices the author of that article
does not just use the matrix~$P$ of shifted polynomials,
but its factorization $P = A\cdot V$.
The matrix~$V$ is a \person{Vandermonde} matrix with
\begin{IEEEeqnarray*}{rCl}
V &\in& \R^{\{0,\dots,n\}\times\{0,\dots,n\}} \\
V_{i,j} &=& j^i
\end{IEEEeqnarray*}
The matrix~$A$ is defined using the derivatives of $p$
by $A = \left(\frac{p}{0!}, \frac{p'}{1!}, \dots, \frac{p^{(n)}}{n!}\right)$,
or element-wise by
\begin{IEEEeqnarray*}{rCl}
A &\in& \R^{\{0,\dots,n\}\times\{0,\dots,n\}} \\
A_{i,j} &=&
\begin{cases}
{i+j \choose i} \cdot p_{i+j} &: i+j \le n \\
0 &: i+j > n
\end{cases}
.
\end{IEEEeqnarray*}
The \person{Taylor} expansion of $p$ allows to express translations of $p$
in terms of its derivatives:
\[
\polyfunc{p}(t+x) =
   \frac{\polyfunc{p}(t)}{0!} + x\cdot \frac{\polyfunc{p}'(t)}{1!}
    + \dots + x^{n}\cdot \frac{\polyfunc{p}^{(n)}(t)}{n!}
.
\]
Thus multiplying $A$ and $V$ yields the sequence of shifted polynomials
with $x \in \{0, -1, \dots, -n\}$.

With this factorization the invertibility of $P$ follows obviously
from the invertibility of the triangular matrix~$A$ and
the invertibility of the \person{Vandermonde} matrix~$V$
with respect to pairwise distinct nodes.

Actually, in the article the general \person{Vandermonde} matrix
with pairwise distinct nodes $\{\ell_0, \dots, \ell_n\}$ is used:
\[
V=
\begin{pmatrix}
1 & 1 & \cdots & 1 \\
-\ell_0 & -\ell_1 & \cdots & -\ell_n \\
\vdots & \vdots & \ddots & \vdots \\
(-\ell_0)^ n & (-\ell_1)^ n & \cdots & (-\ell_n)^ n
\end{pmatrix}
\]
That is, a refinement mask for a polynomial can also be found
when only a certain set of $n+1$ nodes is allowed to be non-zero.
The nodes may even be non-integral.

In retrospect we could have conducted our proof
of \thmref{polynomial-to-mask} with arbitrary nodes, too.
We would have to define $P = (T^{\ell_0}\cdot p, \dots, T^{\ell_n}\cdot p)$
and then use divided differences instead of simple differences
for the LU decomposition.

The paper~\refcite{gustafson2006refine} extends the previous one
by an exploration of the refinability of rational functions.
They find that a rational function~$\varphi$ is refinable if and only if
there is a real sequence $s$
($s\in\summablesequences{0}{\Z}$, i.e.\ a \Laurent{} polynomial)
and a positive natural number $k$ such that
\[
\varphi(t) = \sum_{i\in\mathbb{Z}} \frac{s_i}{(t-i)^k}
\qquad
\land
\qquad
s | (s \uparrow 2) ,
\]
that is, $\exists \polyfunc{m}(z)\in\R[z,z^{-1}]\ \polyfunc{m}(z)\cdot \polyfunc{s}(z) = \polyfunc{s}(z^2)$,
where $2^{k-1}\cdot m$ is the refinement mask.

\section{Future work}

There are some obvious generalizations to be explored:
refinement with respect to factors different from 2,
separable multidimensional refinement
and most general multidimensional refinement
with respect to arbitrary dilation matrices.

Another interesting question is the following one:
By \lemref{refinable-convolution} we know,
that convolution of functions maps to convolution of their refinement masks.
We can use this for defining a kind of convolution.
In order to convolve two functions $\varphi_0$ and $\varphi_1$,
we compute refining masks $m_0$ and $m_1$, respectively,
convolve the masks and then find a function that is refined by $m_0 * m_1$.
In case of polynomial functions there is no notion of convolution
because the involved integrals diverge.
We can however define a convolution based on refinement.
Unfortunately, the mapping from a polynomial function
to a refinement mask is not unique,
consequently the defined convolution is not unique as well --
not to speak of the arbitrary constant factor.
If we choose arbitrary masks from the admissible ones,
then the convolution is not distributive with addition,
i.e. $\psi * (\varphi_0 + \varphi_1)
         = \psi * \varphi_0 + \psi * \varphi_1$
is not generally satisfied.
The open question is, whether it is possible to choose masks for polynomials,
such that the polynomial convolution via refinement
is commutative, associative and distributive.

\section{Acknowledgment}

I like to thank David Larson and David Malone for pointing me to related research
and Emily King for careful proof-reading and discussion of alternative proofs.

\bibliography{wavelet,literature,thielemann,hackage}
\bibliographystyle{plain}

\end{document}